\theoremstyle{plain}
\newtheorem{thm}{Theorem}[section]
\newtheorem{theorem}[thm]{Theorem}
\newtheorem{proposition}[thm]{Proposition}
\newtheorem{claim}{Claim}
\theoremstyle{definition}
\newtheorem{de}[thm]{Definition}
\newtheorem{convention}[thm]{Convention}
\newtheorem{example}[thm]{Example}
\newcommand{\id}{\mathrm{id}}
\newcommand{\lmlt}[1]{\mathrm{LMlt}(#1)}
\newcommand{\ld}{\backslash}
\newcommand{\Ker}{\mathop{\mathrm{Ker}}}
\newcommand{\Mlt}{\mathop{\mathrm{Mlt}}}
\newcommand{\LMlt}{\mathop{\mathrm{LMlt}}}
\newcommand{\RMlt}{\mathop{\mathrm{RMlt}}}
\numberwithin{equation}{section}
\begin{document}

\title{The retraction relation for biracks}

\author{P\v remysl Jedli\v cka}
\author{Agata Pilitowska}
\author{Anna Zamojska-Dzienio}

\address{(P.J.) Department of Mathematics, Faculty of Engineering, Czech University of Life Sciences, Kam\'yck\'a 129, 16521 Praha 6, Czech Republic}
\address{(A.P., A.Z.) Faculty of Mathematics and Information Science, Warsaw University of Technology, Koszykowa 75, 00-662 Warsaw, Poland}

\email{(P.J.) jedlickap@tf.czu.cz}
\email{(A.P.) A.Pilitowska@mini.pw.edu.pl}
\email{(A.Z.) A.Zamojska-Dzienio@mini.pw.edu.pl}

\keywords{Yang-Baxter equation, set-theoretical solution, retraction of a solution, one-sided quasigroup, birack, congruence of an algebra.}
\subjclass[2010]{Primary: 16T25, 08A30. Secondary: 20N02, 08A62, 03C05.}

\date{\today}

\begin{abstract}
In \cite{ESS} Etingof, Schedler and Soloviev introduced, for each non-degenerate involutive set-theoretical solution $(X,\sigma,\tau)$ of the Yang-Baxter equation, the equivalence relation $\sim$ defined on the set $X$ and they considered a new non-degenerate involutive induced \emph{retraction} solution defined on the quotient set $X^{\sim}$.
It is well known that translating set-theoretical non-degenerate solutions of the Yang-Baxter equation into the universal algebra language we obtain an algebra called a \emph{birack}. In the paper we introduce the \emph{generalized retraction} relation $\approx$ on a birack, which is equal to $\sim$ in an involutive case. We present a complete algebraic proof that the relation $\approx$ is a congruence of the birack. Thus we show that the retraction of a set-theoretical non-degenerate solution is well defined not only in the involutive case but also in the case of all non-involutive solutions.
\end{abstract}

\maketitle

\section{Introduction}\label{sec1}
The Yang-Baxter equation is a fundamental equation occurring in integrable models in statistical
 mechanics and quantum field theory~\cite{Jimbo}.
Let $V$ be a vector space. A {\em solution of the Yang--Baxter equation}
is a linear mapping $r:V\otimes V\to V\otimes V$ such that
\begin{align*}
(id\otimes r) (r\otimes id) (id\otimes r)=(r\otimes id) (id\otimes r) (r\otimes id).
\end{align*}
Description of all possible solutions seems to be extremely difficult and therefore
there were some simplifications introduced (see e.g. \cite{Dr90}).

Let $X$ be a basis of the space $V$ and let $\sigma:X^2\to X$ and $\tau: X^2\to X$ be two mappings.
We say that $(X,\sigma,\tau)$ is a {\em set-theoretical solution of the Yang--Baxter equation} if
the mapping $x\otimes y \mapsto \sigma(x,y)\otimes \tau(x,y)$ extends to a solution of the Yang--Baxter
equation. It means that $r\colon X^2\to X^2$, where $r=(\sigma,\tau)$ satisfies the \emph{braid relation}:
\begin{equation}\label{eq:braid}
(id\times r)(r\times id)(id\times r)=(r\times id)(id\times r)(r\times id).
\end{equation}

A solution is called {\em non-degenerate} if the mappings $\sigma(x,\_)$ and $\tau(\_\,,y)$ are bijections,
for all $x,y\in X$.
A solution $(X,\sigma,\tau)$ is {\em involutive} if $r^2=\mathrm{id}_{X^2}$, and it is
\emph{square free} if $r(x,x)=(x,x)$, for every $x\in X$.

\begin{convention}
All solutions we study in this paper are set-theoretical and non-degenerate, so we will call them simply \emph{solutions}. The set $X$ can be of arbitrary cardinality. We investigate both involutive and non-involutive solutions.
\end{convention}

It is known (see e.g. \cite{S06, GIM08, D15}) that there is a one-to-one correspondence between (involutive) solutions of the Yang-Baxter equation and (involutive) \emph{biracks} $(X,\circ,\ld_{\circ},\bullet,/_{\bullet})$ -- algebras which have a structure of two one-sided quasigroups $(X,\circ,\ld_{\circ})$ and $(X,\bullet,/_{\bullet})$ and satisfy some additional identities \eqref{eq:b1}--\eqref{eq:b3}. This fact allows one to characterize solutions of the Yang-Baxter equation applying the universal algebra tools.

In \cite{ESS} Etingof, Schedler and Soloviev introduced, for each involutive solution $(X,\sigma,\tau)$, the equivalence relation $\sim$ on the set $X$: for each $x,y\in X$
\[
x\sim y\quad \Leftrightarrow\quad \tau(\_\,,x)=\tau(\_\,,y).
\]

Using properties of the \emph{structure group} $G(X,r):=\langle X\mid xy=\sigma(x,y)\tau(x,y)\; \forall x,y\in X\rangle$ of a solution $(X,\sigma,\tau)$ they argued that there was a natural induced involutive solution, called the \emph{retraction} of $(X,\sigma,\tau)$, defined on the quotient set $X^{\sim}$. Just recently, Lebed and Vendramin considered in \cite{LV} finite \emph{invertible} solutions which generalize the involutive ones by replacing the condition $r^2=\mathrm{id}_{X^2}$ by the assumption  that $r$ is a bijection.  In particular, in \cite[Lemma 7.4]{LV} they defined the relation $\sim$ on the set $X$: for each $x,y\in X$  
\begin{align}\label{rel:LV}
x\sim y\quad \Leftrightarrow\quad \sigma(x,\_\,)=\sigma(y,\_\,)\quad {\rm and}\quad \tau(\_\,,x)=\tau(\_\,,y),
\end{align}
and they showed that the mapping $r$ also induces a solution on the quotient set $X^{\sim}$.

In the language of biracks the fact that the induced solution is well defined on the quotient set simply means that the relation $\sim$ is a congruence of the corresponding birack. According to our knowledge, the direct proof of this fact was not presented anywhere.

In the paper we introduce the \emph{generalized retraction} relation $\approx$ on a birack $(X,\circ,\ld_{\circ},\bullet,/_{\bullet})$: for each $x,y\in X$
\[
x\mathrel{\approx} y \mathrel{\Leftrightarrow} \text{ for each } z\in X: \; x\circ z=y\circ z \text{ and } z\bullet x=z\bullet y,
\]
and we present a complete algebraic proof that the relation $\approx$ is a congruence of the birack (Theorem \ref{th:congr}). This fact generalizes the results of \cite{ESS} and \cite{LV}.

We also show in Subsection \ref{subsec:multi} that in involutive biracks the generalized retraction relation $\approx$ is equal to the relation $\sim$. Hereby our result confirms that the relation $\sim$ is a congruence of the corresponding involutive birack. What is interesting, our proofs are formulated in a pure universal algebra language and we do not need a structure group of the solution at all. In Section \ref{sec5} we give short and direct proofs in this manner of some results presented by Etingof et al. in \cite{ESS} or Rump in \cite{Rump}.

But we obtain even more. We show that the retraction of a solution is well defined also in the case of a non-involutive solution which means that the generalized retraction relation allows to study a new class of non-involutive solutions.

The paper is organized as follows. In Section \ref{sec2} we recall details of the connection between (involutive) biracks and (involutive) solutions of the Yang-Baxter equation to make the paper self-contained. In Section \ref{sec4a} we introduce the generalized retraction relation on a birack and prove that it is a congruence of it. Theorem \ref{th:congr} is the main result of the paper.
The proof of the theorem was found using the automated deduction software Prover9 \cite{Prover}.
Finally, in Section 4 we define the retraction of an arbitrary solution. We also show that the retraction relation plays an important role in different algebraic constructions, not only for biracks.

\section{Biracks and solutions of the Yang-Baxter equation}\label{sec2}
Let $X$ be a non-empty set and $\ast\colon X^2\to X$ be a binary operation. In universal algebra, a pair $(X,\ast)$ is called a
{\em groupoid} (or a {\em binar}, a {\em binary algebra}, a {\em magma}). In this paper we will also consider algebraic structures with a few
binary operations defined on a set $X$.
\vskip 3mm

Given a groupoid $(X,\ast)$ and an element $x\in X$, one can define two mappings: a {\em left translation} by $x$ and a {\em right translation} by $x$, respectively:
\begin{align*}
L_x\colon X\to X;\ a\mapsto x\ast a\quad\text{ and }\quad R_x\colon X\to X:\ a\mapsto a\ast x.
\end{align*}

\begin{de}
A groupoid $(X,\ast)$ is a {\em quasigroup} if $L_x$ and $R_x$ are bijections, for each $x\in X$.
\end{de}
In particular, this means that, for every $x,y\in X$, the equations $x\ast u=y$ and $u\ast x=y$ have unique solutions in $X$.
One may then define on $X$ two additional operations:
\begin{align*}
x\ld y:=L_x^{-1}(y), \quad {\rm and}\quad x/y:=R_y^{-1}(x)
\end{align*}
 of \emph{left division} and \emph{right division}, respectively and consider the quasigroup as an algebra
 $(X,\ast,\ld,/)$ with three binary operations satisfying for every $x,y\in X$ the following conditions:
\begin{equation}\label{eq:lq}
 x\ast(x\ld y)=y, \quad x\ld (x\ast y)=y,
 \end{equation}
 \begin{equation}\label{eq:rq}
  (y/ x)\ast x=y, \quad (y\ast x)/ x=y.
 \end{equation}

For example, every group $(G,\cdot)$ can be made into a quasigroup $(G,\ast,\ld,/)$ by taking $x\ast y=x\cdot y$, $x\ld y=x\cdot y^{-1}$ and $x/y=x^{-1}\cdot y$.

A quasigroup $(X,\ast)$ may be seen as a generalization of a group --- the operation~$\ast$ can be non-associative but its multiplication table, for a finite set $X$, is a {\em latin square}, that means in every row as well as in every column, each element appears exactly once. These algebras are widely studied and have many interesting applications, e.g. in cryptography, see \cite{Belousov, Pflug90, Shch17}.

\vskip 3mm

When studying quasigroups, one usually works with the permutation group
generated by $L_x$ and $R_x$. This group is called the {\em multiplication} group of~$X$
and denoted by~$\Mlt(X)$.

If all left translations in a groupoid $(X,\ast)$ are bijections
then the algebra is called a {\em left quasigroup}.
A {\em right quasigroup} is defined analogously. In one-sided quasigroups only one-sided multiplication groups are considered,
that means $\LMlt(X)=\langle L_x:\ x\in X\rangle$
for left quasigroups and $\RMlt(X)=\langle R_x:\ x\in X\rangle$ for right ones.

One can regard a left quasigroup $(X,\ast)$
  as an algebra $(X,\ast,\ld)$ with two binary operations  satisfying \eqref{eq:lq}
  and right quasigroup $(X,\ast)$ as an algebra $(X,\ast,/)$
  satisfying \eqref{eq:rq}. It is obvious that $(X,\ld,\ast)$
  is also a left quasigroup and $(X,/,\ast)$ is a right quasigroup.
However, even if we consider a one-sided quasigroup as an algebra with one basic binary operation,
  the second binary operation is implicitly present here anyway. There is an important reason for this --- it is well-known that the homomorphic image of a (one-sided) quasigroup defined with a single binary operation, need not be a (one-sided) quasigroup,
 see, e.g., \cite[Example~1]{Rump}. And our aim is to study a quotient of an algebraic structure with (one-sided) quasigroup operations. Roughly speaking, the class $\mathcal{A}$ of algebraic structures (of a given signature) is closed under the formation of homomorphic images if $\mathcal{A}$ is defined by a set of equations (famous Birkhoff's Theorem or {\bf HSP} Theorem).

A groupoid $(X,\ast)$ is \emph{idempotent}
if, for every $x\in X$,
\begin{align*}
L_x(x)=x,
\end{align*}
or equivalently, if for every $x\in X$,
\begin{align*}
x\ast x=x.
\end{align*}
In a left quasigroup $(X,\ast,\ld)$, the groupoid $(X,\ast)$
  is idempotent if and only if $(X,\ld)$ is idempotent. In this case we say that
 $(X,\ast,\ld)$ is idempotent.

\begin{example}\label{ex:lq}
Let $(A,\cdot)$ be a group, $c\in A$ be a fixed element and $f,g$ be two mutually inverse group automorphisms. Then algebraic structure $(A,\ast,\ld)$ with operations defined as follows
\begin{align*}
x\ast y&=x\cdot f(y\cdot x^{-1})\cdot c,\\
x\ld y&=g(y\cdot x^{-1})\cdot g(c^{-1})\cdot x,
\end{align*}
where $x^{-1}$ is an inverse element to $x$, is a left quasigroup. If $c$ is a neutral element in $A$, then $(A,\ast,\ld)$ is idempotent.
\end{example}
\begin{example}\label{ex:lq2}
Let $X=\{1,2,\ldots, n\}$ be a finite set and let $\pi_i$ for $i=1,2,\ldots n$ be its permutations (they needn't be different). Define on $X$ the operations
\begin{align*}
m\ast k&=\pi_m(k),\\
m\ld k&=\pi_m^{-1}(k).
\end{align*}
Then $(X,\ast,\ld)$ is a left quasigroup, and it is idempotent if and only if, for each $m\in X$, $\pi_m(m)=m$.
\end{example}

\vskip 3mm

Biracks are algebras that appear in low-dimensional topology. They are associated with a link diagram and they are
invariant (up to isomorphism) under the generalized Reidemeister moves for virtual knots
and links. On the other hand, they provide solutions to the Yang-Baxter equation \cite{FJSK}. The equational definition of a birack we use here was given first in \cite{S06}.
\begin{de}
An algebra $(X,\circ,\ld_{\circ},\bullet,/_{\bullet})$ with four binary operations is called a {\em birack}, if $(X,\circ,\ld_{\circ})$ is a left quasigroup,
$(X,\bullet,/_{\bullet})$ is a right quasigroup and the following holds for any $x,y,z\in X$:
\begin{align}
x\circ(y\circ z)=(x\circ y)\circ((x\bullet y)\circ z),\label{eq:b1}\\
(x\circ y)\bullet((x\bullet y)\circ z)=(x\bullet(y\circ z))\circ(y\bullet z), \label{eq:b2}\\
(x\bullet y)\bullet z= (x\bullet (y\circ z))\bullet (y \bullet z).\label{eq:b3}
  \end{align}
  \end{de}
A birack is {\em idempotent} if both one-sided quasigroups $(X,\circ,\ld_{\circ})$ and $(X,\bullet,/_{\bullet})$ are idempotent.
And it is {\em involutive} if it additionally satisfies for every $x,y\in X$:
 \begin{align}
 &(x\circ y)\circ(x\bullet y)=x\quad\Leftrightarrow\quad x\bullet y=(x\circ y)\ld_{\circ} x, \label{eq:linv}\\
&  (x\circ y)\bullet(x\bullet y)=y\quad \Leftrightarrow\quad x\circ y=y/_{\bullet}(x\bullet y).\label{eq:rinv}
 \end{align}
The identities \eqref{eq:linv} and \eqref{eq:rinv} are equivalent to another ones regarded by Rump \cite[Definition 1]{Rump} (see \eqref{eq:R16} and \eqref{eq:R17} below, and \cite[Definition 1.6.]{D15}), namely:
\begin{align}
&(x\circ y)\circ(x\bullet y)=x \quad \Leftrightarrow\quad x\bullet y=(x\circ y)\ld_{\circ} x \quad \Leftrightarrow\quad x=((x\circ y)\ld_{\circ} x)/_{\bullet} y
\quad \stackrel{\scriptsize(\ref{eq:lq})}\Leftrightarrow\quad\nonumber\\
&x=((x\circ y)\ld_{\circ} x)/_{\bullet} (x\ld_{\circ}(x\circ y)) \label{eq:R16a}.
\end{align}
Now, substituting $y$ by $x\ld_{\circ}y$ in \eqref{eq:R16a}, one obtains
\begin{equation}\label{eq:R16}
(y\ld_{\circ} x)/_{\bullet}(x\ld_{\circ} y)=x.
\end{equation}
On the other side, substitution of $y$ by $x\circ y$ in \eqref{eq:R16}, gives \eqref{eq:R16a}.
Similarly,
\begin{align}
&(x\circ y)\bullet(x\bullet y)=y \quad \Leftrightarrow\quad x\circ y=y/_{\bullet}(x\bullet y) \quad \Leftrightarrow\quad\nonumber\\
&x\ld_{\circ}(y/_{\bullet}(x\bullet y))=y\quad \Leftrightarrow\quad ((x\bullet y)/_{\bullet}y)\ld_{\circ}(y/_{\bullet}(x\bullet y))=y\quad \Leftrightarrow\quad \nonumber\\
&(x/_{\bullet} y)\ld_{\circ}(y/_{\bullet} x)=y.\label{eq:R17}
\end{align}
Taking $y=x$ in \eqref{eq:R16} and \eqref{eq:R17} we obtain that in an involutive birack for every $x\in X$ holds:
\begin{align}\label{eq:biq}
(x\ld_{\circ} x)/_{\bullet}(x\ld_{\circ} x)=x\quad{\rm and}\quad (x/_{\bullet} x)\ld_{\circ}(x/_{\bullet} x)=x,
\end{align}
which means that each involutive birack is a \emph{biquandle} (see \cite{FJSK}). Stanovsk\'y proved \cite[Lemma]{S06} that the identities \eqref{eq:biq} are equivalent in any birack.

In particular, the conditions \eqref{eq:biq} imply that in an involutive birack $(X,\circ,\ld_{\circ},\bullet,/_{\bullet})$ the mappings
\begin{align}\label{eq:T}
T\colon X\to X; \quad x\mapsto x\ld_{\circ} x,
\end{align}
and
\begin{align}
S\colon X\to X; \quad x\mapsto x/_{\bullet}x
\end{align}
are mutually inverse bijections. Recall that Etingof et al. in \cite[Proposition 1.4]{ESS} and Rump in \cite[Proposition 2]{Rump} obtained the same results but using the properties of the structure group for an involutive solution.
\vskip 3mm

Now we are ready to translate the constrain of a solution $(X,\sigma,\tau)$
of the Yang-Baxter equation into the language of the universal algebra.
For sake of simplicity let denote $\sigma(x,y)$ by  $x\circ y$ and $\tau(x,y)$ by $x \bullet y$. Then
\begin{align*}
r(x,y)=(\sigma(x,y),\tau(x,y))=(x\circ y,x\bullet y)=(L_x(y),{\textbf{\emph R}}_y(x)),
\end{align*}
where for each $s\in X$, $L_s\colon X\to X$ is the left translation with respect to the operation $\circ$,
and $\textbf{\emph R}_s\colon X\to X$ is the right translation with respect to the operation $\bullet$.

\vskip 2mm
This justifies the replacement of the notion $(X,\sigma,\tau)$, for a solution of the Yang-Baxter equation, by $(X,L,{\textbf{\emph R}})$, where $L\colon X\to X^X$; $x\mapsto L_x$ and ${\textbf{\emph R}}\colon X\to X^X$; $x\mapsto {\textbf{\emph R}}_x$. On the other hand, we can treat a solution $(X,L,{\textbf{\emph R}})$ as an algebra $(X,\circ,\ld_{\circ},\bullet,/_{\bullet})$
with four binary operations defined on the set $X$.

Using this notation, the braid relation \eqref{eq:braid} implies
that in the algebra $(X,\circ,\ld_{\circ},\bullet,/_{\bullet})$ the conditions \eqref{eq:b1}--\eqref{eq:b3}
hold (see \cite{D15, S06}).

Furthermore, the assumption that a solution $(X,L,{\textbf{\emph R}})$ is non-degenerate gives that, for every $s\in X$,
the mappings $L_s$ and $\textbf{\emph R}_s$ are invertible. Hence simply, $(X,\circ,\ld_{\circ})$ is a left quasigroup with
$x\ld_{\circ} y:=L_x^{-1}(y)$ and $(X,\bullet,/_{\bullet})$ is a right quasigroup with
$x/_{\bullet}y:=\textbf{\emph R}_y^{-1}(x)$.

Finally, a solution $(X,L,{\textbf{\emph R}})$ is involutive if $(X,\circ,\ld_{\circ},\bullet,/_{\bullet})$ satisfies \eqref{eq:linv} and \eqref{eq:rinv},
and it is square free if both one-sided quasigroups $(X,\circ,\ld_{\circ})$ and $(X,\bullet,/_{\bullet})$ are idempotent.

Hence, each (involutive) solution of the Yang-Baxter equation yields an (involutive) birack.

The converse is also true.
\begin{theorem}\label{th:ESS}\cite[Lemma 1.2]{D15}
If $(X,\circ,\ld_{\circ},\bullet,/_{\bullet})$ is an (involutive) birack, then defining $r(x,y)=(x\circ y, x\bullet y)$
we obtain an (involutive) solution of the Yang-Baxter equation.
\end{theorem}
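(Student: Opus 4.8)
The plan is to observe that, once one unwinds both composites in the braid relation \eqref{eq:braid} on an arbitrary triple $(x,y,z)\in X^3$, the required equality splits coordinatewise into exactly the three defining identities \eqref{eq:b1}--\eqref{eq:b3} of a birack. So the proof is a direct verification, and no auxiliary structure (in particular, no structure group) is needed.

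First I would evaluate the left-hand composite $(\id\times r)(r\times \id)(\id\times r)$ on $(x,y,z)$, applying the maps from right to left and using $r(a,b)=(a\circ b,a\bullet b)$ at each stage. This yields the triple
\[
\big(x\circ(y\circ z),\ (x\bullet(y\circ z))\circ(y\bullet z),\ (x\bullet(y\circ z))\bullet(y\bullet z)\big).
\]
Evaluating the right-hand composite $(r\times \id)(\id\times r)(r\times \id)$ in the same way produces
\[
\big((x\circ y)\circ((x\bullet y)\circ z),\ (x\circ y)\bullet((x\bullet y)\circ z),\ (x\bullet y)\bullet z\big).
\]
Comparing the two triples coordinate by coordinate, the equality of the first coordinates is precisely \eqref{eq:b1}, the equality of the second coordinates is precisely \eqref{eq:b2}, and the equality of the third coordinates is precisely \eqref{eq:b3}. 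Hence the braid relation holds for $r$ exactly because $(X,\circ,\ld_{\circ},\bullet,/_{\bullet})$ is a birack. Non-degeneracy is then immediate: since $(X,\circ,\ld_{\circ})$ is a left quasigroup each map $\sigma(x,\_)=L_x$ is a bijection, and since $(X,\bullet,/_{\bullet})$ is a right quasigroup each map $\tau(\_\,,y)={\textbf{\emph R}}_y$ is a bijection.

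For the involutive case it remains to check $r^2=\id_{X^2}$. Computing $r^2(x,y)=r(x\circ y,\ x\bullet y)=\big((x\circ y)\circ(x\bullet y),\ (x\circ y)\bullet(x\bullet y)\big)$, the condition $r^2=\id_{X^2}$ amounts exactly to the two equalities $(x\circ y)\circ(x\bullet y)=x$ and $(x\circ y)\bullet(x\bullet y)=y$, which are the defining involutivity identities \eqref{eq:linv} and \eqref{eq:rinv}. The only real care needed anywhere in the argument is bookkeeping: tracking the order of composition (right to left) and keeping straight which two coordinates each factor $\id\times r$ or $r\times\id$ acts on. There is no conceptual obstacle; the entire statement reduces to matching the three coordinates of the braid relation against the three birack axioms, together with the trivial $r^2$ computation in the involutive case.
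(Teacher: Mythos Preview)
Your argument is correct: the braid relation unwinds, coordinate by coordinate, into exactly \eqref{eq:b1}--\eqref{eq:b3}, non-degeneracy is the one-sided quasigroup structure, and $r^2=\id$ is literally \eqref{eq:linv}--\eqref{eq:rinv}. The paper does not supply its own proof of this theorem---it is quoted from \cite[Lemma~1.2]{D15} (see also \cite{S06})---and the direct computation you give is precisely the standard one used in those references, so there is nothing to compare.
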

By Theorem \ref{th:ESS} there is a one-to-one correspondence between
(involutive) solutions of the Yang-Baxter equation and (involutive) biracks.

\begin{example}[Lyubashenko, see \cite{Dr90}]
Let $X$ be a non-empty set and let $x\circ y=f(y)$, $x\bullet y=g(x)$, where $f,g\colon X\to X$. Then conditions \eqref{eq:b1}--\eqref{eq:b3} are satisfied if and only if $fg=gf$. If $f$ and $g$ are bijections, we can define two additional binary operations $x\ld_\circ y=f^{-1}(y)$, $x/_\bullet y=g^{-1}(x)$. The algebra $(X,\circ,\ld_{\circ},\bullet,/_{\bullet})$ is a birack which is involutive if and only if $g=f^{-1}$.

Such involutive birack corresponds to a solution which is called a \emph{permutation solution}. If $f=g=id$, the birack is called a \emph{projection} one and the corresponding solution is called \emph{trivial}.

\end{example}
\vskip 2mm
By Theorem \ref{th:ESS} we can see that not only involutive biracks are biquandles. By results of Smoktunowicz and Vendramin \cite[Corollary 3.3]{SV} biracks corresponding to solutions of the Yang-Baxter equation which originate from \emph{skew braces} are biquandles. Moreover, by observation of Lebed and Vendramin \cite[Lemma 1.4]{LV} any finite birack corresponding to an \emph{injective} solution (the canonical mapping $i\colon X\to G(X,r)$; $x\mapsto x$ is an injection)
is a biquandle, too.

Clearly, all involutive solutions are injective.

Many explicit examples of biracks can be found in \cite{BF,EN,Nel}. We use some of them, translating them into the notion used in this paper.
\begin{example}\cite[Example 10]{Nel}\label{ex:biracks1}
Consider a set $X=\{1,2,3,4\}$ with permutations $L_1=\textbf{\emph R}_2=(12)$, $L_2=\textbf{\emph R}_1=(12)(34)$, $L_3=L_4=\id$, $\textbf{\emph R}_3=\textbf{\emph R}_4=(34)$. Then, one can define operations $\circ$ and $\bullet$: $i\circ j:=L_i(j)$ and $i\bullet j:=\textbf{\emph R}_j(i)$, similarly as in Example \ref{ex:lq2}. In this way, one obtains a birack $(X,\circ,\ld_{\circ},\bullet,/_{\bullet})$ with the following multiplication tables for operations $\circ$ and $\bullet$ (note that here $\circ=\ld_{\circ}$ and $\bullet=/_{\bullet}$)
\[
  \begin{array}{c|cccc}
   \circ & 1 & 2 & 3 & 4\\
   \hline
   1 & 2 & 1 & 3 &4\\
   2 & 2 & 1 & 4 & 3 \\
   3 & 1 & 2 & 3 & 4\\
   4 & 1 & 2 & 3 &4
  \end{array}
  \qquad
	\begin{array}{c|cccc}
   \bullet & 1 & 2 & 3 & 4\\
   \hline
   1 & 2 & 2 & 1 &1\\
   2 & 1 & 1 & 2 & 2 \\
   3 & 4 & 3 & 4 & 4\\
   4 & 3 & 4 & 3 &3
  \end{array}.
 \]
This birack is non-idempotent, non-involutive, e.g. $(3\circ 1)\circ(3\bullet 1)=4$, and it is not a biquandle since $(3\ld _{\circ} 3)/_{\bullet}(3\ld _{\circ} 3)=(3\circ 3)\bullet (3\circ 3)=3\bullet 3=4$.
\end{example}

\section{Generalized retraction congruence}\label{sec4a}
One of the basic notions in general algebra is the one of a \emph{congruence} --- an equivalence relation on an algebraic structure compatible with this structure (the operations are well-defined on the equivalence classes). As the Fundamental Homomorphism Theorem (First Isomorphism Theorem) says, for a given algebraic structure its homomorphic images and quotients are exactly the same (up to isomorphism). For details, see any textbook on Universal Algebra e.g. \cite[Section 1.5]{Ber}. In case of biracks, necessary definitions look as follows.
\begin{de}
An equivalence relation $\theta$ on the set $X$ of elements of a birack $(X,\circ,\ld_{\circ},\bullet,/_{\bullet})$ is a \emph{congruence on} $(X,\circ,\ld_{\circ},\bullet,/_{\bullet})$ if it is compatible with all four operations of the birack $X$, i.e. if $x\mathrel{\theta} y$ and $z\mathrel{\theta} t$ then also
\begin{align*}
(x\circ z)\mathrel{\theta}(y\circ t)\\
(x\ld_{\circ} z)\mathrel{\theta}(y\ld_{\circ} t)\\
(x\bullet z)\mathrel{\theta}(y\bullet t)\\
(x/_{\bullet} z)\mathrel{\theta}(y/_{\bullet} t).
\end{align*}
\end{de}
If $\theta$ is a congruence on a birack $(X,\circ,\ld_{\circ},\bullet,/_{\bullet})$, then the quotient set $X^\theta=\{x^\theta: x\in X\}$ of the equivalence classes under $\theta$, is again a birack, called the \emph{quotient birack}, under operations defined by
\begin{align*}
x^\theta\circ z^\theta=(x\circ z)^\theta\\
x^\theta\ld_{\circ} z^\theta=(x\ld_{\circ} z)^\theta\\
x^\theta\bullet z^\theta=(x\bullet z)^\theta\\
x^\theta /_{\bullet} z^\theta=(x/_{\bullet} z)^\theta.
\end{align*}

\begin{de}
Let $(X,\circ,\ld_{\circ},\bullet,/_{\bullet})$ be a birack. The equivalence relation $\approx$ defined on $X$ in the following way
\begin{equation}\label{eq:ret}
x\mathrel{\approx} y \mathrel{\Leftrightarrow} L_x=L_y \text{ and } {\textbf{\emph R}}_x={\textbf{\emph R}}_y
\end{equation}
is called the \emph{generalized retraction}.
\end{de}

Obviously, the condition \eqref{eq:ret} can be formulated equivalently as
\begin{equation}\label{eq:reteq}
\forall\, z\in X \quad x\circ z=y\circ z \text{ and } z\bullet x=z\bullet y\quad
\end{equation}
or
\begin{equation}\label{eq:reteq2}
\forall\, z\in X \quad x\ld_\circ z=y\ld_\circ z\text{ and }z/_\bullet x=z/_\bullet y.
\end{equation}

\begin{theorem}\label{th:congr}
The generalized retraction is a congruence of a birack $(X,\circ,\ld_{\circ},\bullet,/_{\bullet})$.
\end{theorem}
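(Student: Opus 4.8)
The plan is to verify that $\approx$ is compatible with each of the four basic operations. Since $\approx$ is an equivalence relation it is enough to check the eight \emph{one-sided} compatibilities --- for $\ast\in\{\circ,\ld_\circ,\bullet,/_\bullet\}$, that $x\approx y$ implies $x\ast z\approx y\ast z$ and that $x\approx y$ implies $z\ast x\approx z\ast y$ --- the two-sided statements then following by transitivity. By the equivalent descriptions \eqref{eq:reteq} and \eqref{eq:reteq2}, four of these are immediate: $x\approx y$ forces the literal equalities $x\circ z=y\circ z$, $x\ld_\circ z=y\ld_\circ z$, $z\bullet x=z\bullet y$ and $z/_\bullet x=z/_\bullet y$. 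So the theorem reduces to proving, for all $z$ and all $x\approx y$, the four relations
\[
z\circ x\approx z\circ y,\qquad x\bullet z\approx y\bullet z,\qquad z\ld_\circ x\approx z\ld_\circ y,\qquad x/_\bullet z\approx y/_\bullet z.
\]

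For the first two I would read \eqref{eq:b1} and \eqref{eq:b3} as composition identities between translations,
\[
L_aL_b=L_{a\circ b}\,L_{a\bullet b}\qquad\text{and}\qquad \mathbf{R}_b\,\mathbf{R}_a=\mathbf{R}_{a\bullet b}\,\mathbf{R}_{a\circ b}
\]
(maps composed on the right), which can be solved to express the translations of a $\circ$-product, and of a $\bullet$-product, through the translations of the factors: $L_{a\circ b}=L_aL_bL_{a\bullet b}^{-1}$, $\mathbf{R}_{a\circ b}=\mathbf{R}_{a\bullet b}^{-1}\mathbf{R}_b\mathbf{R}_a$ and $L_{a\bullet b}=L_{a\circ b}^{-1}L_aL_b$, $\mathbf{R}_{a\bullet b}=\mathbf{R}_b\mathbf{R}_a\mathbf{R}_{a\circ b}^{-1}$. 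Taking $a=z$, $b=x$ in the first pair and using that $x\approx y$ gives $L_x=L_y$, $\mathbf{R}_x=\mathbf{R}_y$ and hence $z\bullet x=z\bullet y$, we get $L_{z\circ x}=L_{z\circ y}$ and $\mathbf{R}_{z\circ x}=\mathbf{R}_{z\circ y}$, i.e.\ $z\circ x\approx z\circ y$. Taking $a=x$, $b=z$ in the second pair and using $L_x=L_y$ (hence $x\circ z=y\circ z$) together with $\mathbf{R}_x=\mathbf{R}_y$ gives $L_{x\bullet z}=L_{y\bullet z}$, $\mathbf{R}_{x\bullet z}=\mathbf{R}_{y\bullet z}$, i.e.\ $x\bullet z\approx y\bullet z$.

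The two division cases are the crux, and here the mixed axiom \eqref{eq:b2} is needed. The key lemma I would isolate is: \emph{if $x\approx y$ then $z\bullet(z\ld_\circ x)=z\bullet(z\ld_\circ y)$ for every $z$}. To prove it, instantiate \eqref{eq:b2} at the triple $(z/_\bullet z,\ z,\ z\ld_\circ x)$: the right quasigroup law collapses $(z/_\bullet z)\bullet z$ to $z$, the left quasigroup law then collapses $z\circ(z\ld_\circ x)$ to $x$, and \eqref{eq:b2} becomes
\[
\bigl((z/_\bullet z)\circ z\bigr)\bullet x=\bigl((z/_\bullet z)\bullet x\bigr)\circ\bigl(z\bullet(z\ld_\circ x)\bigr),
\]
whence $z\bullet(z\ld_\circ x)=\bigl((z/_\bullet z)\bullet x\bigr)\ld_\circ\bigl(((z/_\bullet z)\circ z)\bullet x\bigr)$. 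On the right $x$ occurs only inside $(z/_\bullet z)\bullet x=\mathbf{R}_x(z/_\bullet z)$ and $((z/_\bullet z)\circ z)\bullet x=\mathbf{R}_x((z/_\bullet z)\circ z)$, so replacing $x$ by $y$ changes nothing (as $\mathbf{R}_x=\mathbf{R}_y$), which proves the lemma. Granting it, put $e:=z\bullet(z\ld_\circ x)=z\bullet(z\ld_\circ y)$. Instantiating \eqref{eq:b1} at $(z,\ z\ld_\circ x,\ t)$ and left-dividing by $z$ gives $(z\ld_\circ x)\circ t=z\ld_\circ\bigl(x\circ(e\circ t)\bigr)$, which equals $(z\ld_\circ y)\circ t$ since $L_x=L_y$ and $e$ is common; so $L_{z\ld_\circ x}=L_{z\ld_\circ y}$. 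Instantiating \eqref{eq:b3} at $(w,\ z,\ z\ld_\circ x)$ gives $(w\bullet z)\bullet(z\ld_\circ x)=(w\bullet x)\bullet e$, whose right-hand side is unchanged under $x\mapsto y$ (again $\mathbf{R}_x=\mathbf{R}_y$, and $e$ is common), and as $w$ ranges over $X$ so does $w\bullet z$; so $\mathbf{R}_{z\ld_\circ x}=\mathbf{R}_{z\ld_\circ y}$. Hence $z\ld_\circ x\approx z\ld_\circ y$. Finally $x/_\bullet z\approx y/_\bullet z$ is the mirror statement, proved by the symmetric argument with the two quasigroup structures of the birack interchanged --- equivalently, by applying the case just settled to the opposite birack $x\circ'y:=y\bullet x$, $x\bullet'y:=y\circ x$, whose left and right translations are $\mathbf{R}_x$ and $L_x$ and on which the relation $\approx$ is unchanged.

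The only genuine obstacle is the key lemma: \eqref{eq:b2} is the single birack identity relating $\circ$ and $\bullet$, and the substitution $(z/_\bullet z,\ z,\ z\ld_\circ x)$ that makes it collapse into a usable identity is not something one guesses at once --- this is precisely the step for which the automated search with Prover9 earns its keep. Everything else is bookkeeping with the quasigroup laws and with the two composition identities derived from \eqref{eq:b1} and \eqref{eq:b3}.
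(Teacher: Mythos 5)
Your proof is correct, and although it rests on the same two pivotal ingredients as the paper's argument, it is organized differently and more economically. The shared core: compatibility with $\circ$ and $\bullet$ comes from the composition identities $L_xL_y=L_{x\circ y}L_{x\bullet y}$ and $\mathbf{R}_z\mathbf{R}_y=\mathbf{R}_{y\bullet z}\mathbf{R}_{y\circ z}$ extracted from \eqref{eq:b1} and \eqref{eq:b3} (the paper carries out the same computation element-wise in its ``Summarizing'' step), and your key lemma $z\bullet(z\ld_\circ x)=z\bullet(z\ld_\circ y)$ is exactly the paper's Claim~4, obtained from the very same substitution into \eqref{eq:b2} (it is the paper's Claim~1 specialized to equal first and second arguments). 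Where you genuinely diverge is in how the lemma is exploited for the division case. The paper reaches $\mathbf{R}_{a\ld_\circ c}=\mathbf{R}_{a\ld_\circ d}$ by building up five further universally quantified identities (its Claims~1, 2, 3, 5, 6) to prove $y\bullet(x\ld_\circ c)=y\bullet(x\ld_\circ d)$ for \emph{arbitrary} $y$, and uses two more (Claims~7, 8) plus injectivity of $L_b$ for the $L$-part; you obtain the $\mathbf{R}$-part from a single instantiation of \eqref{eq:b3} at $(w,z,z\ld_\circ x)$ combined with surjectivity of $\mathbf{R}_z$, and the $L$-part from a single instantiation of \eqref{eq:b1}, which is a real shortening. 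Your reduction to one-sided compatibilities via transitivity, and your explicit check that the opposite structure $x\circ'y=y\bullet x$, $x\bullet'y=y\circ x$ is again a birack on which $\approx$ is unchanged, also make fully precise the concluding ``similar due to symmetry'' step that the paper only asserts. The trade-off is that the paper's chain of claims consists of reusable birack identities stated in full generality, whereas your argument is leaner but tailored to this one theorem.
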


\begin{proof}

Let $a,b,c,d$ be elements of a birack $(X,\circ,\ld_{\circ},\bullet,/_{\bullet})$ such that $a\mathrel{\approx} b$ and $c\mathrel{\approx} d$.
Now we show that $L_{a\circ c}=L_{b\circ d}$, ${\textbf{\emph R}}_{a\circ c}={\textbf{\emph R}}_{b\circ d}$, $L_{a\ld_\circ c}=L_{b\ld_\circ d}$  and ${\textbf{\emph R}}_{a\ld_\circ c}={\textbf{\emph R}}_{b\ld_\circ d}$. We start with a sequence of claims.
\vskip 2mm
For each $x,y,z\in X$ the following hold in a birack:

\begin{claim}
\begin{align}\label{claim1}
&y\bullet(x\ld_\circ z)=[(x/_\bullet y)\bullet(y\circ(x\ld_\circ z))]\ld_\circ[((x/_\bullet y)\circ y)\bullet z].
\end{align}
\end{claim}
\begin{proof}We have	
\begin{align*}
&[(x/_\bullet y)\bullet(y\circ(x\ld_\circ z))]\circ [y\bullet(x\ld_\circ z)]\stackrel{\scriptsize(\ref{eq:b2})}=
[(x/_\bullet y)\circ y]\bullet[((x/_\bullet y)\bullet y)\circ(x\ld_\circ z)]\stackrel{\scriptsize(\ref{eq:rq})}=\\
&=[(x/_\bullet y)\circ y]\bullet[x\circ(x\ld_\circ z)]\stackrel{\scriptsize(\ref{eq:lq})}=((x/_\bullet y)\circ y)\bullet z,
\end{align*}
and Claim~1 is obtained by dividing both sides with $[(x/_\bullet y)\bullet(y\circ(x\ld_\circ z))]$ from the left.
\end{proof}

\begin{claim}
\begin{align}\label{claim2}
&y\circ (x\ld_\circ z)=(x/_\bullet y)\ld_\circ[((x/_\bullet y)\circ y)\circ z].
\end{align}
\end{claim}
\begin{proof}
\begin{align*}
&(x/_\bullet y)\circ [y\circ (x\ld_\circ z)]\stackrel{\scriptsize(\ref{eq:b1})}=
[(x/_\bullet y)\circ y]\circ[((x/_\bullet y)\bullet y)\circ(x\ld_\circ z)]\stackrel{\scriptsize(\ref{eq:rq})}=\\
&=[(x/_\bullet y)\circ y]\circ[x\circ(x\ld_\circ z)]\stackrel{\scriptsize(\ref{eq:lq})}=((x/_\bullet y)\circ y)\circ z,
\end{align*}
and we divide both sides with $(x/_\bullet y)$.
\end{proof}

\begin{claim}
\begin{align}\label{claim3}
&(x\bullet y)\circ[(x\ld_\circ z)\bullet((x\ld_\circ z)\ld_\circ y)]=z\bullet[z\ld_\circ(x\circ y)].
\end{align}
\end{claim}
\begin{proof}
\begin{align*}
&(x\bullet y)\circ[(x\ld_\circ z)\bullet((x\ld_\circ z)\ld_\circ y)]\stackrel{\scriptsize(\ref{eq:lq})}=
[x\bullet ((x\ld_\circ z)\circ ((x\ld_\circ z)\ld_\circ y)]\circ[(x\ld_\circ z)\bullet((x\ld_\circ z)\ld_\circ y)]\stackrel{\scriptsize(\ref{eq:b2})}=\\
&=[x\circ(x\ld_\circ z)]\bullet[(x\bullet (x\ld_\circ z))\circ((x\ld_\circ z)\ld_\circ y)]\stackrel{\scriptsize(\ref{eq:lq})}=
z\bullet[(x\bullet (x\ld_\circ z))\circ((x\ld_\circ z)\ld_\circ y)]\stackrel{\scriptsize(\ref{eq:b1})}=\\
&=z\bullet[(x\circ(x\ld_\circ z))\ld_\circ(x\circ( (x\ld_\circ z)\circ((x\ld_\circ z)\ld_\circ y))]\stackrel{\scriptsize(\ref{eq:lq})}=
z\bullet[z\ld_\circ(x\circ y)],
\end{align*}
where (\ref{eq:b1}) is used in the form $(x\bullet y)\circ z=(x\circ y)\ld_\circ(x\circ(y\circ z))$.
\end{proof}

\begin{claim}
\begin{align}\label{claim4}
&x\bullet(x\ld_\circ c)=x\bullet(x\ld_\circ d).
\end{align}
\end{claim}
\begin{proof}
\begin{align*}
&x\bullet(x\ld_\circ c)\stackrel{\scriptsize(\ref{claim1})}=
[(x/_\bullet x)\bullet(x\circ(x\ld_\circ c))]\ld_\circ[((x/_\bullet x)\circ x)\bullet c]\stackrel{\scriptsize(\ref{eq:lq})}=\\
&=[(x/_\bullet x)\bullet c]\ld_\circ[((x/_\bullet x)\circ x)\bullet c]\stackrel{\scriptsize(\ref{eq:reteq})}=
[(x/_\bullet x)\bullet d]\ld_\circ[((x/_\bullet x)\circ x)\bullet d]\stackrel{\scriptsize(\ref{eq:lq}),	(\ref{claim1})}=x\bullet(x\ld_\circ d).\qedhere
\end{align*}
\end{proof}

\begin{claim}
\begin{align}\label{claim5}
&z\bullet(z\ld_\circ(x\circ c))=z\bullet(z\ld_\circ(x\circ d)).
\end{align}
\end{claim}
\begin{proof}
\begin{align*}
&z\bullet(z\ld_\circ(x\circ c))\stackrel{\scriptsize(\ref{claim3})}=(x\bullet c)\circ[(x\ld_\circ z)\bullet((x\ld_\circ z)\ld_\circ c)]
\stackrel{\scriptsize(\ref{eq:reteq})}=\\
&=(x\bullet d)\circ[(x\ld_\circ z)\bullet((x\ld_\circ z)\ld_\circ c)]\stackrel{\scriptsize(\ref{claim4})}=
(x\bullet d)\circ[(x\ld_\circ z)\bullet((x\ld_\circ z)\ld_\circ d)]\stackrel{\scriptsize(\ref{claim3})}=z\bullet(z\ld_\circ(x\circ d)).\qedhere
\end{align*}
\end{proof}

\begin{claim}
\begin{align}\label{claim6}
&(x/_\bullet y)\bullet(y\circ(x\ld_\circ c))=(x/_\bullet y)\bullet(y\circ(x\ld_\circ d)).
\end{align}
\end{claim}
\begin{proof}
\begin{align*}
&(x/_\bullet y)\bullet(y\circ(x\ld_\circ c))\stackrel{\scriptsize(\ref{claim2})}=
(x/_\bullet y)\bullet[(x/_\bullet y)\ld_\circ(((x/_\bullet y)\circ y)\circ c)]\stackrel{\scriptsize(\ref{claim5})}=\\
&=(x/_\bullet y)\bullet[(x/_\bullet y)\ld_\circ(((x/_\bullet y)\circ y)\circ d)]\stackrel{\scriptsize(\ref{claim2})}=
(x/_\bullet y)\bullet(y\circ(x\ld_\circ d)).\qedhere
\end{align*}
\end{proof}

\begin{claim}
\begin{align}\label{claim7}
b\circ ((a\ld_\circ z)\circ x)=z\circ((b\bullet (a\ld_\circ z))\circ x).
\end{align}
\end{claim}
\begin{proof}
\begin{align*}
b\circ((a\ld_\circ z)\circ x)&\stackrel{\scriptsize(\ref{eq:b1})}=(b\circ (a\ld_\circ z))\circ((b\bullet (a\ld_\circ z))\circ x)
\stackrel{\scriptsize(\ref{eq:reteq})}=\\
&=(a\circ (a\ld_\circ z))\circ((b\bullet (a\ld_\circ z))\circ x)\stackrel{\scriptsize(\ref{eq:lq})}
=z\circ((b\bullet (a\ld_\circ z))\circ x).\qedhere
\end{align*}
\end{proof}

\begin{claim}
\begin{align}\label{claim8}
b\bullet (a\ld_\circ c)=b\bullet (a\ld_\circ d).
\end{align}
\end{claim}
\begin{proof}
By \eqref{eq:b2}, the following condition holds
\begin{equation}\label{eq:c1}
y\bullet z=[x\bullet(y\circ z)]\ld_\circ [(x\circ y)\bullet ((x\bullet y)\circ z)].
\end{equation}
Hence,
\begin{align*}
b\bullet (a\ld_\circ x)&\stackrel{\scriptsize(\ref{eq:c1})}=[(b/_\bullet a)\bullet(b\circ (a\ld_\circ x))]\ld_\circ [((b/_\bullet a)\circ b)\bullet (((b/_\bullet a)\bullet b)\circ (a\ld_\circ x))]\stackrel{\scriptsize(\ref{eq:reteq})}=\\
&=[(b/_\bullet a)\bullet(a\circ (a\ld_\circ x))]\ld_\circ [((b/_\bullet a)\circ b)\bullet (((b/_\bullet a)\bullet a)\circ (a\ld_\circ x))]\stackrel{\scriptsize(\ref{eq:lq}),(\ref{eq:rq})}= \\
&=((b/_\bullet a)\bullet x)\ld_\circ [((b/_\bullet a)\circ b)\bullet ((b\circ (a\ld_\circ x))]\stackrel{\scriptsize(\ref{eq:reteq})}=\\
&=((b/_\bullet a)\bullet x)\ld_\circ [((b/_\bullet a)\circ b)\bullet ((a\circ (a\ld_\circ x))]\stackrel{\scriptsize(\ref{eq:lq})}=\\
&=((b/_\bullet a)\bullet x)\ld_\circ (((b/_\bullet a)\circ b)\bullet x).
\end{align*}
Hence,
\[
b\bullet (a\ld_\circ c)=((b/_\bullet a)\bullet c)\ld_\circ (((b/_\bullet a)\circ b)\bullet c)\stackrel{\scriptsize(\ref{eq:reteq})}=
((b/_\bullet a)\bullet d)\ld_\circ (((b/_\bullet a)\circ b)\bullet d)=b\bullet (a\ld_\circ d).
\qedhere \]
\end{proof}
\noindent
Summarizing, we obtain
\begin{align*}
&L_{a\circ c}(x)=(a\circ c)\circ x\stackrel{\scriptsize(\ref{eq:reteq})}=(b\circ c)\circ x\stackrel{\scriptsize(\ref{eq:lq})}=\\
&(b\circ c)\circ((b\bullet c)\circ ((b\bullet c)\ld_\circ x))\stackrel{\scriptsize(\ref{eq:b1})}=b\circ(c\circ((b\bullet c)\ld_\circ x))\stackrel{\scriptsize(\ref{eq:reteq})}=\\
&=b\circ (d\circ ((b\bullet d)\ld_\circ x))\stackrel{\scriptsize(\ref{eq:b1})}=(b\circ d)\circ((b\bullet d)\circ((b\bullet d)\ld_\circ x))\stackrel{\scriptsize(\ref{eq:lq})}=\\
&=(b\circ d)\circ x=L_{b\circ d}(x),
\end{align*}
and
\begin{align*}
&{\textbf{\emph R}}_{a\circ c}(x)=x\bullet(a\circ c)\stackrel{\scriptsize(\ref{eq:reteq})}=x\bullet(b\circ c)\stackrel{\scriptsize(\ref{eq:b3})}=\\
&=((x\bullet b)\bullet c)/_\bullet(b\bullet c)\stackrel{\scriptsize(\ref{eq:reteq})}=((x\bullet b)\bullet d)/_\bullet(b\bullet d)\stackrel{\scriptsize(\ref{eq:b3})}=\\
&=x\bullet (b\circ d)={\textbf{\emph R}}_{b\circ d}(x),
\end{align*}
which gives that $(a\circ c)\mathrel{\approx}(b\circ d)$.
\vskip 2mm
\noindent
Furthermore, one has
\begin{align*}
&L_b((a\ld_\circ d)\circ x)=b\circ((a\ld_\circ d)\circ x)\stackrel{\scriptsize(\ref{claim7})}=
d\circ((b\bullet (a\ld_\circ d))\circ x)\stackrel{\scriptsize(\ref{eq:reteq})}=\\
&=c\circ((b\bullet (a\ld_\circ d))\circ x)
\stackrel{\scriptsize(\ref{claim8})}=c\circ((b\bullet (a\ld_\circ c))\circ x)\stackrel{\scriptsize(\ref{claim7})}=\\
&=b\circ((a\ld_\circ c)\circ x)=L_b((a\ld_\circ c)\circ x).
\end{align*}
Since $L_b$ is a bijection, it follows that for each $x\in X$,
\begin{align}\label{eq:fl}
L_{L_a^{-1}(c)}(x)=L_{a\ld_\circ c}(x)=(a\ld_\circ c)\circ x=(a\ld_\circ d)\circ x=L_{a\ld_\circ d}(x)=L_{L_a^{-1}(d)}(x).
\end{align}
Moreover,
\begin{align*}
&y\bullet (x\ld_\circ c)\stackrel{\scriptsize(\ref{claim1})}=
[(x/_\bullet y)\bullet(y\circ(x\ld_\circ c))]\ld_\circ[((x/_\bullet y)\circ y)\bullet c]\stackrel{\scriptsize(\ref{eq:reteq})}=\\
&[(x/_\bullet y)\bullet(y\circ(x\ld_\circ c))]\ld_\circ[((x/_\bullet y)\circ y)\bullet d]\stackrel{\scriptsize(\ref{claim6})}=\\
&[(x/_\bullet y)\bullet(y\circ(x\ld_\circ d))]\ld_\circ[((x/_\bullet y)\circ y)\bullet d]\stackrel{\scriptsize(\ref{claim1})}=
y\bullet (x\ld_\circ d).
\end{align*}
Hence, for each $y\in X$
\begin{align}\label{eq:fr}
{\textbf{\emph R}}_{L_a^{-1}(c)}(y)={\textbf{\emph R}}_{a\ld_\circ c}(y)=y\bullet (a\ld_\circ c)=y\bullet (a\ld_\circ d)=
{\textbf{\emph R}}_{a\ld_\circ d}(y)={\textbf{\emph R}}_{L_a^{-1}(d)}(y).
\end{align}
By assumption, $L_a^{-1}=L_b^{-1}$. Then one gets
\[
L_{a\ld_\circ c}=L_{L_a^{-1}(c)}\stackrel{\scriptsize(\ref{eq:fl})}=L_{L_a^{-1}(d)}=L_{L_b^{-1}(d)}=L_{b\ld_\circ d},
\]
and
\[
{\textbf{\emph R}}_{a\ld_\circ c}={\textbf{\emph R}}_{L_a^{-1}(c)}\stackrel{\scriptsize(\ref{eq:fr})}={\textbf{\emph R}}_{L_a^{-1}(d)}=
{\textbf{\emph R}}_{L_b^{-1}(d)}={\textbf{\emph R}}_{b\ld_\circ d}.
\]
This finally implies $(a\ld_\circ c)\mathrel{\approx}(b\ld_\circ d)$.
\vskip 2mm
\noindent
The proof $(a\bullet c)\mathrel{\approx}(b\bullet d)$ and $(a/_\bullet c)\mathrel{\approx}(b/_\bullet d)$,
is similar due to the symmetry of the defining identities of a birack.

\end{proof}
Theorem \ref{th:congr} immediately forces that each quotient birack of $(X,\circ,\ld_{\circ},\bullet,/_{\bullet})$ satisfies all equational
properties of $(X,\circ,\ld_{\circ},\bullet,/_{\bullet})$. In particular, if $(X,\circ,\ld_{\circ},\bullet,/_{\bullet})$ is involutive (square-free, right cyclic (see \eqref{eq:RC}), etc.) then obviously $(X/_{\approx},\circ,\ld_{\circ},\bullet,/_{\bullet})$ is involutive (square-free, right cyclic, etc.).

\vskip 2mm

The next example shows that both defining conditions of the relation $\approx$ are essential.
\begin{example}\label{ex:zle}
Let $(X,\circ,\ld_{\circ},\bullet,/_{\bullet})$ be a birack on the set $X=\{0,1,2,3,4\}$ with the following multiplication tables for operations $\circ$ and $\bullet$:
\[
  \begin{array}{c|ccccc}
   \circ & 0 & 1 & 2 & 3 & 4\\
   \hline
   0 & 0 & 2 & 1 & 4 & 3 \\
   1 & 0 & 2 & 1 & 4 & 3\\
   2 & 4 & 2 & 1 & 3 & 0\\
   3 & 4 & 2 & 1 & 3 & 0\\
   4 & 3 & 2 & 1 & 0 & 4
  \end{array}
  \qquad
	\begin{array}{c|ccccc}
   \bullet & 0 & 1 & 2 & 3 & 4\\
   \hline
   0 & 0 & 3 & 3 & 0 & 0\\
   1 & 1 & 2 & 2 & 1 & 1\\
   2 & 2 & 1 & 1 & 2 & 2\\
   3 & 3 & 0 & 0 & 3 & 3\\
   4 & 4 & 4 & 4 & 4 & 4
  \end{array}.
 \]
Clearly, $L_0=L_1=(12)(34)$ and $L_2=L_3=(04)(12)$, but ${\textbf{\emph R}}_0\neq{\textbf{\emph R}}_1$. In consequence, $L_{0\circ 2}=L_1\neq L_4= L_{1\circ 3}$.
\end{example}
Note that the birack in Example \ref{ex:zle} is not involutive, since e.g. $(0\circ 1)\bullet (0\bullet 1)=2$.
In Section \ref{sec5} we will discuss behavior of the relation of generalized retraction in the case of an involutive birack.

\begin{example}\label{ex:SV}
In \cite[Example 3.9]{SV} the authors consider a non-involutive, not square-free birack on the set $X=\{1,2,\ldots,8\}$ obtained from a skew-brace. Namely, $L_1=L_6=\id$, $L_2=L_5=(25)(47)$, $L_3=L_8=(38)(47)$, $L_4=L_7=(25)(38)$ and $\textbf{\emph R}_1=\textbf{\emph R}_4=\textbf{\emph R}_6=\textbf{\emph R}_7=\id$, $\textbf{\emph R}_2=\textbf{\emph R}_3=\textbf{\emph R}_5=\textbf{\emph R}_8=(25)(38)$. In particular, $L_i=L_i^{-1}$ and $\textbf{\emph R}_i=\textbf{\emph R}_i^{-1}$ for each $i\in X$. Then, there are 4 equivalence classes of a congruence $\approx$: $\{1,6\},\; \{2,5\},\; \{3,8\},\; \{4,7\}$. The quotient birack $(X/_{\approx},\circ,\ld_{\circ},\bullet,/_{\bullet})$ is isomorphic to the birack on the set $Y=\{a,b,c,d\}$ with $L_i=\textbf{\emph R}_i=\id$ for each $i\in Y$. The latter birack is both involutive and square-free.
\end{example}

\section{Applications to other constructions}\label{sec5}

Relations closely related to the generalized retraction congruence studied in Section \ref{sec4a} are well established in the literature. They play an important role in different constructions applied to algebraic structures. The aim of this section is to recall some of these congruences, mainly in the context of the solutions of the Yang-Baxter equation. We want to present a unified approach to them in the universal algebra language.

\subsection{The retraction of solutions}\label{subsec:multi}
Let $(X,\sigma,\tau)$ be a solution of the Yang-Baxter equation. Etingof et al. in \cite[Section 3.2]{ESS} defined the following relation on the set $X$
\begin{align}\label{eq:ESS}
x\sim y\quad \Leftrightarrow\quad \tau(\_\,,x)=\tau(\_\,,y)
\end{align}
and showed (\cite[Proposition 2.2]{ESS}) that for an involutive solution
\[\tau(\_\,,x)=\tau(\_\,,y)\quad \Leftrightarrow\quad \sigma(x,\_)=\sigma(y,\_).
\]
In the language of biracks this means that for an involutive case relations $\approx$ and $\sim$ are equal.
Below we present the direct proof of the equivalence
\begin{equation}\label{equiv}
L_x=L_y\quad \Leftrightarrow\quad {\textbf{\emph R}}_x={\textbf{\emph R}}_y
\end{equation}
which holds in any involutive birack. The idea is similar to the proof from \cite{ESS} but we don't use the construction of the structure group which simplifies and shortens the proof.

First, note that if $(X,\circ,\ld_{\circ},\bullet,/_{\bullet})$ is a birack then directly by \eqref{eq:b1} we have that for every $x,y\in X$
\begin{equation}\label{eq:sol1}
L_{x\circ y}L_{x\bullet y}=L_xL_y,
\end{equation}
which is equivalent to
\begin{equation}\label{eq:sol2}
L_{x\bullet y}^{-1}L_{x\circ y}^{-1}=L_y^{-1}L_x^{-1}.
\end{equation}
Let $(X,\circ,\ld_{\circ},\bullet,/_{\bullet})$ be an involutive birack. We show that  $L_a^{-1}T=T\textbf{\emph R}_a$ for any $a\in X$, where $T\colon X\rightarrow X$ is an invertible mapping defined by \eqref{eq:T}. Indeed,
\begin{align*}
L_a^{-1}T(c)=L_a^{-1}L_c^{-1}(c)\stackrel{\scriptsize(\ref{eq:sol2})}=L_{c\bullet a}^{-1}L_{c\circ a}^{-1}(c)\stackrel{\scriptsize(\ref{eq:rinv})}=L_{c\bullet a}^{-1}\textbf{\emph R}_a(c)=L_{\textbf{\emph R}_a(c)}^{-1}\textbf{\emph R}_a(c)=T\textbf{\emph R}_a(c),
\end{align*}
for any $c\in X$.

Etingof et al., using bijective 1-cocycles defined on the structure group of an involutive solution $(X,\sigma,\tau)$,  reasoned that the quotient set $X^{\sim}$ has a structure of an involutive solution  $(X^{\sim},\overline{\sigma},\overline{\tau})$  with $\overline{\sigma}(x^{\sim},y^{\sim})=\sigma(x,y)^{\sim}$ and $\overline{\tau}(x^{\sim},y^{\sim})=\tau(x,y)^{\sim}$ for $x^{\sim},y^{\sim}\in X^{\sim}$  and $x\in x^{\sim}, y\in y^{\sim}$. They called such solution the \emph{retraction} of $(X,\sigma,\tau)$ and denoted it by ${\rm Ret}(X,\sigma,\tau)$.

Now, it is obvious, by Theorem \ref{th:congr} and \eqref{equiv}, that $\sim$ is just a congruence on an involutive birack. Clearly, the birack corresponding to the retraction solution ${\rm Ret}(X,\sigma,\tau)$ is the quotient birack $(X^{\sim},\circ,\ld_{\circ},\bullet,/_{\bullet})$.

A similar kind of retraction procedure but with use of the relation \eqref{rel:LV} appeared in \cite[Section 7]{LV} for finite, non-degenerate, invertible solutions.

Theorem \ref{th:congr} directly shows that the retraction of $(X,\sigma,\tau)$ is well defined not only for involutive or invertible solutions. Applying the notion from Section \ref{sec4a} we can naturally extend the definition.
\begin{de}\label{ret}
Let $(X,L,{\textbf{\emph R}})$ be a solution of the Yang-Baxter equation. The solution $(X^{\approx},L,{\textbf{\emph R}})$  with $L_{x^{\approx}}(y^{\approx})=L_x(y)^{\approx}$ and $\textbf{\emph R}_{y^{\approx}}(x^{\approx})=\textbf{\emph R}_y(x)^{\approx}$ for $x^{\approx},y^{\approx}\in X^{\approx}$  and $x\in x^{\approx},\; y\in y^{\approx}$ is the \emph{retraction} solution of $(X,L,{\textbf{\emph R}})$.
\end{de}

Obviously, the birack corresponding to the retraction solution of $(X,L,{\textbf{\emph R}})$ is equal to the quotient birack $(X^{\approx},\circ,\ld_{\circ},\bullet,/_{\bullet})$. 

Among involutive solutions, an important role is played by \emph{multipermutation solutions}, see e.g. \cite{CJO10,GIC,Ven}.

Let $(X,\sigma,\tau)$ be an involutive solution. One defines \emph{iterated retraction} in the following way: ${\rm Ret}^0(X,\sigma,\tau):=(X,\sigma,\tau)$ and
${\rm Ret}^k(X,\sigma,\tau):={\rm Ret}({\rm Ret}^{k-1}(X,\sigma,\tau))$, for any natural number $k>1$.

A solution $(X,\sigma,\tau)$ is called a \emph{multipermutation solution of level $m$} if $m$ is the least nonnegative integer such that
\[
|{\rm Ret}^{m}(X,\sigma,\tau)|=1.
\]

In the language of an involutive birack $(X,\circ,\ld_{\circ},\bullet,/_{\bullet})$ this means that applying $m$ times the congruence $\sim$ to the subsequent quotient biracks, one obtains the one-element birack. By Definition \ref{ret} one can generalize the notion for non-involutive solutions.
\begin{example}
The solution $(X,L,\textbf{\emph R})$ corresponding to a birack given in Example \ref{ex:SV} is a non-involutive multipermutation solution of level 2. In \cite[Example 4.18.]{SV} the skew-brace which produces this solution is said to have a \emph{finite multipermutation level}.
\end{example}

\subsection{Cycle sets}
In \cite{Rump} Rump showed that there is a correspondence between involutive solutions of the Yang-Baxter equation and \emph{non-degenerate cycle sets}.
\begin{de}\cite{Rump}
A groupoid $(X,\odot)$ is a \emph{cycle set} if all left multiplications are invertible and the equation
\begin{equation}\label{eq:RC}
(x\odot y)\odot(x\odot z)=(y\odot x)\odot(y\odot z)
\end{equation}
holds for all $x,y,z\in X$. A cycle set is \emph{non-degenerate} if
\begin{align}\label{map:T}
the \; mapping \quad T\colon X\to X; \quad x\mapsto x\odot x \quad  is\; a \; bijection.
\end{align}
\end{de}
It follows that each cycle set can be regarded as a left quasigroup $(X,\odot,\ld_{\odot})$ which satisfies \emph{right cyclic law} \eqref{eq:RC} with respect to the first operation --- this law can be also formulated in the form $L_{x\odot y}L_x=L_{y\odot x}L_y$. If $(X,\odot,\ld_{\odot})$ satisfies both \eqref{eq:RC} and \eqref{map:T} with respect to the first operation, we call it a \emph{non-degenerate right cyclic} left-quasigroup.

Let a birack $(X,\circ,\ld_{\circ},\bullet,/_{\bullet})$ be involutive. Then
by \eqref{eq:sol1}, \eqref{eq:linv} and \eqref{eq:lq}  we obtain for every $x,y\in X$
\begin{align*}
L_{x\circ y}L_{(x\circ y)\ld_{\circ} x}=L_xL_y\quad \Leftrightarrow\quad L_{x}L_{x\ld_{\circ} y}=L_yL_{y\ld_{\circ} x},
\end{align*}
substituting $y$ by $x\ld_{\circ} y$ for $\Rightarrow$, and $y$ by $x\circ y$ for $\Leftarrow$.
Since
\begin{align*}
L_{x}L_{x\ld_{\circ} y}=L_yL_{y\ld_{\circ} x} \quad \Leftrightarrow\quad L_{x\ld_{\circ} y}^{-1}L_{x}^{-1}=L_{y\ld_{\circ} x}^{-1}L_y^{-1},
\end{align*}
one obtains that for every $x,y,z\in X$
\begin{align*}
(x\ld_{\circ} y)\ld_{\circ} (x\ld_{\circ} z)=(y\ld_{\circ} x)\ld_{\circ} (y\ld_{\circ} z).
\end{align*}
Recall that the mapping $T$ defined by \eqref{eq:T} is a bijection in an involutive birack. Therefore, $(X,\ld_{\circ},\circ)$ is a non-degenerate right cyclic left quasigroup (compare to \cite[Proposition 1]{Rump}).
\vskip 2mm

It is also true that each non-degenerate right cyclic left quasigroup $(X,\ld,\ast)$ determines an involutive birack. (Note that here $(X,\ld)$ is a cycle set.)
Introducing the second operation $\ast$ into the type (which plays the role of left division of $\ld$, i.e. $\ast$ is also a left-quasigroup operation but not necessarily right cyclic), one can formulate the result in the language of right cyclic left quasigroups $(X,\ld,\ast)$ and involutive biracks.
\begin{theorem}\cite[Proposition 2]{Rump}, \cite[Proposition 1.5]{D15}\label{th:RD}
Let $(X,\ld,\ast)$ be a non-degenerate right cyclic left quasigroup.
Then defining $x\circ y=x\ast y$, $x\ld_{\circ}y=x\ld y$, $x\bullet y=(x\ast y)\ld x$, and $x/_{\bullet} y=z$, where $z$ is the unique one such that $z\ld z=y\ast(x\ld x)$,
the algebra $(X,\circ,\ld_{\circ},\bullet,/_{\bullet})$ is an involutive birack.
\end{theorem}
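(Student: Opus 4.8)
The plan is to verify directly, in the universal-algebra language, that the four operations built from a non-degenerate right cyclic left quasigroup $(X,\ld,\ast)$ satisfy all the axioms of an involutive birack. Write $L_x$ for the left translation $y\mapsto x\ld y$ and $T$ for the bijection $x\mapsto x\ld x$; recall that $\ast$ is characterised by $x\ld(x\ast y)=y=x\ast(x\ld y)$, so that the $\ld_{\circ}$-translations are the maps $L_x$ and the $\circ$-translations are the $L_x^{-1}$, and that the right cyclic law reads $(x\ld y)\ld(x\ld z)=(y\ld x)\ld(y\ld z)$, i.e.\ $L_{x\ld y}L_x=L_{y\ld x}L_y$. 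The cheap axioms go first. That $(X,\circ,\ld_{\circ})$ is a left quasigroup is \eqref{eq:lq}, immediate since $\circ=\ast$ and $\ld_{\circ}=\ld$ are mutually inverse left translations. The involutive identities are just as cheap: with $u=x\ast y$, so that $x\bullet y=u\ld x$, one has $(x\circ y)\circ(x\bullet y)=u\ast(u\ld x)=x$, which is \eqref{eq:linv}, and $(x\circ y)\bullet(x\bullet y)=u\bullet(u\ld x)=(u\ast(u\ld x))\ld u=x\ld u=y$, which is \eqref{eq:rinv}.

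The first genuine point is that $(X,\bullet,/_{\bullet})$ is a right quasigroup, i.e.\ \eqref{eq:rq}; observe that $/_{\bullet}$ is well defined precisely because $T$ is a bijection, namely $x/_{\bullet}y=T^{-1}(y\ast T(x))$. The tool I would use is the diagonal instance of the right cyclic law (put $z=x$): $(x\ld y)\ld T(x)=T(y\ld x)$. For $(y/_{\bullet}x)\bullet x=y$, set $z=y/_{\bullet}x$, so $x\ld(z\ld z)=T(y)$, and $w=z\ast x$, so $z\ld w=x$; then the diagonal law applied to $z$ and $w$ gives $T(w\ld z)=(z\ld w)\ld(z\ld z)=x\ld(z\ld z)=T(y)$, hence $w\ld z=y$, that is, $z\bullet x=y$. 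The identity $(y\bullet x)/_{\bullet}x=y$ follows by the same device. This is the step in which non-degeneracy is genuinely used.

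It remains to establish \eqref{eq:b1}--\eqref{eq:b3}. Rather than check them separately, I would introduce $r(x,y)=(x\circ y,x\bullet y)=(x\ast y,(x\ast y)\ld x)$ and record the rewriting rule $r(x,x\ld y)=(y,y\ld x)$, immediate from the definitions. Using surjectivity of the left translations, every triple of $X^3$ can be written as $(p,\,p\ld q,\,(p\ld q)\ld s)$; applying the rewriting rule three times to each side of the braid relation \eqref{eq:braid}, and using the right cyclic law at each stage to bring the coordinate to be transformed into the required shape, collapses the two sides to $(p\ast s,\,(p\ast s)\ld q,\,(q\ld(p\ast s))\ld(q\ld p))$ and $(p\ast s,\,(p\ast s)\ld q,\,((p\ast s)\ld q)\ld((p\ast s)\ld p))$; the two third coordinates are exactly the two sides of a single instance of the right cyclic law, so \eqref{eq:braid} holds. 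By the discussion recalled in Section \ref{sec2}, the identities \eqref{eq:b1}--\eqref{eq:b3} then hold in $(X,\circ,\ld_{\circ},\bullet,/_{\bullet})$, and together with the previous paragraph this exhibits it as an involutive birack. (One could instead verify \eqref{eq:b1} and \eqref{eq:b3} directly, each collapsing at once to $L_{x\ld y}L_x=L_{y\ld x}L_y$, and then \eqref{eq:b2} by a longer substitution in the same spirit.) I expect the bookkeeping in this last step to be the main obstacle: the algebra involved is shallow, but the terms proliferate, which is no doubt why the automated search was convenient.
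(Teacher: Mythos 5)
Your argument is correct and complete; note, however, that the paper itself offers no proof of Theorem \ref{th:RD} --- it is quoted from \cite{Rump} and \cite{D15}, where it is obtained via the cycle group, respectively the RC-calculus --- so the only in-paper material to compare against is the subsection on cycle sets, which treats the \emph{converse} direction (from involutive biracks to non-degenerate right cyclic left quasigroups). Your verification is a self-contained equational one, very much in the spirit the paper advocates. The individual steps check out: the left-quasigroup and involutivity axioms are immediate from \eqref{eq:lq}; the right-quasigroup identities \eqref{eq:rq} for $\bullet$ and $/_{\bullet}$ do follow from the diagonal instance $(x\ld y)\ld(x\ld x)=(y\ld x)\ld(y\ld x)$ of \eqref{eq:RC} together with injectivity of $T$, which is exactly where \eqref{map:T} is needed (both of your computations are right); and I have checked that the normal form $(p,\,p\ld q,\,(p\ld q)\ld s)$ with the rewriting rule $r(x,x\ld y)=(y,y\ld x)$ collapses the two sides of \eqref{eq:braid} to triples agreeing in the first two coordinates and differing in the third precisely by the instance $((p\ast s)\ld q)\ld((p\ast s)\ld p)=(q\ld(p\ast s))\ld(q\ld p)$ of \eqref{eq:RC}. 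The final appeal to the equivalence between \eqref{eq:braid} and \eqref{eq:b1}--\eqref{eq:b3} (for a non-degenerate $r$, and you have established non-degeneracy by proving the one-sided quasigroup axioms) is legitimate, since the paper asserts that implication in Section \ref{sec2}; a reader wanting a purely identity-based proof would follow your parenthetical remark and verify \eqref{eq:b1}--\eqref{eq:b3} directly, each reducing to the right cyclic law written as an equality of composed $\ld$-translations.
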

Hence, finding all involutive solutions of the Yang-Baxter equation is equivalent to constructing all non-degenerate right cyclic left quasigroups.

\begin{example}\label{Ex:2}
 Let $X=\{1,2,3,4\}$ and let $\ld$ be the following operation:
 \[
  \begin{array}{c|cccc}
  \ld & 1 & 2 & 3 & 4\\
  \hline
  1 & 3 & 4 & 2 & 1\\
  2 & 3 & 4 & 2 & 1\\
  3 & 4 & 3 & 1 & 2\\
  4 & 4 & 3 & 1 & 2
  \end{array}
 \]
The groupoid $(X,\ld)$ is a cycle set satisfying~(\ref{map:T}) and the corresponding involutive solution $(X,L,\textbf{\emph R})$
is $L_1=L_2=\textbf{\emph R}_1=\textbf{\emph R}_2=(1423)$ and $L_3=L_4=\textbf{\emph R}_3=\textbf{\emph R}_4=(1324)$.
\end{example}
\begin{example}\label{Ex:1}
Let $(A,+)$ be an abelian group and $f$ its automorphism. By $(\id-f)$ we denote the endomorphism of $(A,+)$ defined by $x\mapsto x-f(x)$. Assume that $(\id-f)$ is nilpotent of degree 2 and $c\in \Ker(\id-f)$. Then $(A,\ld,\ast)$ with
\begin{align*}
 x\ld y=(\id-f)(x)+f(y)+c \;\; {\rm and}\;\;  x\ast y=f^{-1}(y-(\id-f)(x)-c)
\end{align*}
is a non-degenerate right cyclic left quasigroup. Since in this case, $f-\id=\id-f^{-1}$, we obtain that $(A,\circ,\ld_{\circ},\bullet,/_{\bullet})$, with $x\circ y=(\id-f)(x)+f(y)+c$ and
$x\bullet y=f^{-1}(x)+(\id-f^{-1})(y)-f^{-1}(c)$, is an involutive birack.
\end{example}

Rump showed in \cite[Theorem 2]{Rump} that each finite cycle set is non-degenerate. He introduced the notion of a \emph{cycle group} and used its properties to obtain this result.
Actually, he proved a stronger result than this; if we are interested in finite
cycle sets only, we
 can present a short and direct proof based on the properties of finite one-sided quasigroups. It is evident (see e.g. \cite[Section 8.6.]{RS})
that each finite left quasigroup has a finite left multiplication group
and therefore, taking $k=|\lmlt{X}|$,
we have, for $x,y\in X$,
\begin{equation*}
L_x^{-k}(y)=\underbrace{x\ld (x\ld(x\ldots \ld(x}_{k-\text{times}}\strut\ld y)\ldots))=y,
\end{equation*}
or equivalently
\begin{equation*}
L_x^k(y)=\underbrace{x\ast (x\ast(x\ldots \ast(x}_{k-\text{times}}\strut\ast y)\ldots))=y.
\end{equation*}
Due to above properties one can consider finite one-sided quasigroups as algebras
with one basic binary operation e.g. of multiplication $\ast$, the
operation $\ld$ is defined then by means of multiplication: $x\ld y= L_x^{k-1}(y)$.

Moreover, by \eqref{eq:lq} and \eqref{eq:RC}, the following holds for $x,y$ in a right cyclic left quasigroup $(X,\ld,\ast)$:
\begin{align}\label{eq:kon1}
x\ld (y\ld y)=(y\ld (y\ast x))\ld (y\ld y)=((y\ast x)\ld y)\ld ((y\ast x)\ld y).
\end{align}
(Compare to \cite[Lemma 1]{Rump}). Equivalently, the condition \eqref{eq:kon1} can be stated  that in a right cyclic left quasigroup $(X,\ld,\ast)$, for each $x,y\in X$, there exists an element $a\in X$ such that $x\ld (y\ld y)=a\ld a$. In particular, the latter holds for $y=x$.
\begin{proposition}\cite[Theorem 2]{Rump}
Each finite right cyclic left quasigroup is non-degenerate.
\end{proposition}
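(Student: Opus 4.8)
The goal is to show that in a finite right cyclic left quasigroup $(X,\ld,\ast)$ the map $T\colon x\mapsto x\ld x$ is a bijection. Since $X$ is finite, it suffices to prove $T$ is surjective (equivalently injective). The plan is to use the identity \eqref{eq:kon1}, which says that for each $x,y\in X$ the element $x\ld(y\ld y)$ is again of the form $a\ld a = T(a)$ for a suitable $a$; specializing $y=x$ gives $x\ld T(x)\in \im T$ for every $x$. So I would first establish, more generally, that $\im T$ is closed under all the left translations $L_x^{-1}$ (with respect to $\ast$), i.e. under all the maps $y\mapsto x\ld y$, at least when restricted to arguments already in $\im T$. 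Concretely, \eqref{eq:kon1} shows $x\ld(y\ld y) = a\ld a$ for some $a$, so $L_x^{-1}(\im T)\subseteq \im T$ for every $x$; hence $\im T$ is invariant under the whole group $\lmlt{X}$.

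Next I would exploit finiteness of $\lmlt{X}$: setting $k=|\lmlt{X}|$ we have $L_x^k=\id$ for every $x$, so each $L_x^{-1}=L_x^{k-1}$ is a positive power of $L_x$; thus $L_x(\im T)\subseteq \im T$ as well, and in fact $L_x$ restricts to a bijection of the finite set $\im T$ onto itself. The key point is to produce a single element of $X$ that lies in $\im T$ and then move it around to cover all of $X$. Here I would use that $x\ld x\in\im T$ trivially, so $\im T\neq\emptyset$; pick $y_0=a_0\ld a_0\in\im T$. Then for any $x\in X$, $x\ld y_0\in\im T$ by the invariance just proved, and applying this repeatedly along a word in the $L_x$'s, every element of the orbit of $y_0$ under $\lmlt{X}$ lies in $\im T$. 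The remaining task is to show this orbit is all of $X$, i.e. that $\lmlt{X}$ acts transitively on $X$ — which is false in general! So this naive approach needs correction.

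The correct route, I believe, is a counting argument rather than an orbit argument. From \eqref{eq:kon1} the map $\Phi_x\colon \im T\to\im T$, $z\mapsto x\ld z$, is well-defined, and since $z\mapsto x\ld z$ is injective on all of $X$ (left translations of $\ast$ are bijective, so their inverses $L_x^{-1}$, i.e. $y\mapsto x\ld y$, are bijective), $\Phi_x$ is an injection of the finite set $\im T$ into itself, hence a bijection. Now fix any $w\in X$; I want $w\in\im T$. Write $w = x\ld(x\ld x)\cdot(\text{something})$... more precisely: by \eqref{eq:kon1} applied with $y=x$, every element $x\ld(x\ld x)$ lies in $\im T$; iterating, $L_x^{-m}(x\ld x)\in\im T$ for all $m\ge 0$, and with $m=k-1$ this gives $L_x^{-(k-1)}(x\ld x)=L_x(x\ld x)=x\ast(x\ld x)\in\im T$. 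More usefully, for $m=k$ we recover $x\ld x$ itself, confirming consistency. The decisive observation is: $|\im T|\le |X|$, and I will show $|\im T|\ge|X|$ by exhibiting an injection $X\hookrightarrow\im T$.

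\textbf{The main obstacle.} The crux — and where I expect the real work to be — is manufacturing that injection $X\hookrightarrow\im T$. The natural candidate is a map built from \eqref{eq:kon1}: the assignment sending $x$ to the element $a$ with $a\ld a = x\ld(x\ld x)$; but uniqueness of $a$ is exactly what we are trying to prove, so this is circular. I think the resolution is instead to show directly that $T$ itself is injective by a cancellation argument: suppose $x\ld x = y\ld y$; apply the right cyclic law \eqref{eq:RC} cleverly (e.g. with the two equal elements $x\ld x$ and $y\ld y$ substituted) together with the fact that every $L_z$ has finite order, to derive $x=y$. Translating \eqref{eq:kon1} through, one gets that $x\ld(x\ld x)$ determines $x$ up to the finite-order dynamics of $L_x$; pinning this down to actual equality $x=y$ using $L_x^k=\id$ is the step that requires care. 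Once injectivity of $T$ on the finite set $X$ is in hand, bijectivity — and hence non-degeneracy — is immediate, and by Theorem \ref{th:RD} every finite cycle set (equivalently, finite right cyclic left quasigroup) then yields an involutive birack, recovering Rump's result \cite[Theorem 2]{Rump} without cycle groups.
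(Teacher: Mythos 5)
Your write-up as it stands is not a complete proof: you end by declaring that the decisive step (injectivity of $T$) ``requires care'' and you do not carry it out, so formally there is a gap. But the gap is illusory, because you have already written down a full proof of surjectivity without noticing. You correctly establish (i) that $\im T$ is closed under every map $z\mapsto x\ld z$, via \eqref{eq:kon1}, and (ii) that consequently $L_x^{-m}(x\ld x)\in\im T$ for all $m\ge 0$. Now take $m=k-1$ with $k=|\lmlt{X}|$: you compute $L_x^{-(k-1)}(x\ld x)=L_x(x\ld x)=x\ast(x\ld x)$ and assert it lies in $\im T$ --- but $x\ast(x\ld x)=x$ by \eqref{eq:lq}, so that line literally says $x\in\im T$ for every $x\in X$. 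That is surjectivity of $T$, hence bijectivity by finiteness, and the proposition is proved. This is exactly the paper's argument: for each $z$ one writes $z=L_z^{-k}(z)=L_z^{-(k-2)}(z\ld(z\ld z))$ and then pulls the inner expression back into $\im T$ one application of $L_z^{-1}$ at a time using \eqref{eq:kon1}.

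Two of your detours should be discarded. The transitivity worry is a red herring: you do not need a single $y_0\in\im T$ whose $\lmlt{X}$-orbit covers $X$; for each individual $z$ you use the element $z\ld(z\ld z)\in\im T$, which depends on $z$, together with only the cyclic group generated by $L_z$, in which $z$ trivially lies in the orbit of $L_z^{-2}(z)$. And the closing plan --- proving injectivity of $T$ directly by cancellation from $x\ld x=y\ld y$ --- is both unnecessary and harder than what you already have: on a finite set surjectivity suffices, and surjectivity is what \eqref{eq:kon1} hands you.
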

\begin{proof}
Let $(X,\ld,\ast)$ be a finite right cyclic left quasigroup. We will show that the mapping $T\colon X\to X; x\mapsto x\ld x $ is a surjection. Let $z\in X$ and $k=|\lmlt{X}|>2$,
\begin{equation*}
z=L_z^{-k}(z)=L_z^{-k+2}(z\ld(z\ld z))=L_z^{-k+2}(u_1\ld u_1)=L_z^{-k+3}(z\ld (u_1\ld u_1))=\ldots =u_{k-2}\ld u_{k-2}=T(u_{k-2}),
\end{equation*}
where $u_i$, for $i=1,\ldots,k-2$, is an element of $X$ defined as in \eqref{eq:kon1}. For $k\leq 2$ the result is immediate.
\end{proof}

\vskip 2mm
Rump also proved (\cite[Lemma 2]{Rump}) that, for any cycle set $(X,\ld)$,
the relation $\alpha\subseteq X\times X$
\begin{align}\label{rel1}
 x\mathrel{\alpha} y\quad \Leftrightarrow \quad \forall z\in X \; x\ld z=y\ld z
\end{align}
is a congruence of $(X,\ld)$. In the proof, Rump used an assumption that for each $x\in X$, the mapping $a\mapsto x\ld a$ is a surjection. Again, introducing the operation $\ast$ into the type, one can present an alternative proof in the language of right cyclic left quasigroups $(X,\ld,\ast)$.

Let $x,x',y,y',z\in X$ and $x\mathrel{\alpha} x'$ and $y\mathrel{\alpha} y'$. This means that $x\ld z=x'\ld z$ and $y\ld z=y'\ld z$ for every $z\in X$. Then
by \eqref{eq:lq} and right-cyclic law we have
\[
(x\ld y)\ld z=(x\ld y)\ld(x\ld(x\ast z))=(y\ld x)\ld(y\ld(x\ast z))=(y'\ld x)\ld(y'\ld(x\ast z))=(x\ld y')\ld(x\ld(x\ast z))=
(x'\ld y')\ld z.
\]
Hence,
$x\ld y\mathrel{\alpha} x'\ld y'$.

But the quotient $(X^\alpha,\ld)$ is again a cycle set only if a cycle set $(X,\ld)$ was non-degenerate,  (see \cite[Example 1, Proposition 10]{Rump}). The reason for this is that in general, for an arbitrary right cyclic left quasigroup $(X,\ld,\ast)$, the relation $\alpha$ does not need to be a congruence of $(X,\ast)$.

But, by Theorem \ref{th:RD}, the additional condition \eqref{map:T} guarantees that the left quasigroup $(X,\ld,\ast)$ defines the right quasigroup on~$X$ as well.
We denote the latter by $(X,/,\cdot)$. Note that the operation $/$ is called \emph{dual} to $\ld$ in \cite[Definition 1]{Rump}, and the algebra $(X,\ld,/)$ is called \emph{RLC-system} in \cite[Definition 1.6]{D15}. Moreover, $(X,/)$ is a \emph{left-cyclic} right quasigroup and operations $\ld$ and $/$ are connected by \eqref{eq:R16} and \eqref{eq:R17} (substituting $\ld_{\circ}$ by $\ld$ and $/_{\bullet}$ by $/$). This is equivalent to the fact that the mapping $S\colon X\rightarrow X; x\mapsto x/x$ is the inverse of $T$ and \eqref{equiv} holds. It follows that the relation $\alpha$ is a congruence of the algebra $(X,/)$, too.

In the involutive birack $(X,\ast,\ld,\cdot,/)$, the conditions \eqref{eq:linv}, \eqref{eq:kon1} and \eqref{eq:biq} imply
\[
y\cdot x=(y\ast x)\ld y=[((y\ast x)\ld y)\ld((y\ast x)\ld y)]/[((y\ast x)\ld y)\ld((y\ast x)\ld y)]=
[x\ld(y\ld y)]/[x\ld(y\ld y)].
\]

Since by \eqref{eq:rinv}, $x\ast y=y/(x\cdot y)$, we obtain
that the operations of the right multiplication $\cdot$ and the left multiplication $\ast$ of an involutive birack are expressed by the operations
of the left $\ld$ and right $/$ divisions. Since $\alpha$ is a congruence with respect to divisions, the relation $\alpha$ is also a congruence due to the additional operation $\ast$. This exactly means that satisfying the condition \eqref{map:T} by a right cyclic left quasigroup $(X,\ld,\ast)$ is sufficient for the relation $\alpha$ to be a congruence of $(X,\ld,\ast)$.

\vskip 3mm

\subsection{Groupoid modes}\label{subsec:modes}
Romanowska and Smith investigated in \cite[Section 8.4]{RS} \emph{groupoid modes} -- idempotent and medial groupoids.
A groupoid $(X,\ast)$ is
\emph{medial} if, for every $x,y,z,t\in X$,
\begin{align*}\label{eq:med}
(x\ast y)\ast (z\ast t)=(x\ast z)\ast (y\ast t)  \quad \Leftrightarrow\quad
L_{x\ast y}L_z=L_{x\ast z}L_y.
\end{align*}

For each natural number $k$ they defined the following
equivalence relation on a mode $(X,\ast)$. For $a,b\in X$,
\[
a\varrho_k b \quad \Leftrightarrow\quad\forall x\in X \;
ax^k:=(((a\ast\underbrace{x)\ast x)\ast\dots )\ast x}_{k-\text{times}}=(((b\ast\underbrace{x)\ast x)\ast\dots )\ast x}_{k-\text{times}}=:bx^k.
\]
They proved that each of the relations $\varrho_k$ is a congruence of any mode $(A,\ast)$. Clearly, $\varrho_1=\;\sim$.
\vskip 3mm

They also showed that each $\varrho_k$-class $(a^{\varrho_k},\ast)$ is $k$-reductive. A groupoid $(X,\ast)$ is
\emph{$k$-reductive} if for every $x,y\in X$,
\begin{align}\label{eq:med}
xy^k=y.
\end{align}
$1$-reductive groupoid is usually called a \emph{right zero (right trivial) semigroup or a \emph{(right) projection groupoid}.}
Romanowska and Smith proved that if a mode $(X,\ast)$ is $n$-reductive then
the congruences $\varrho_k$ form an increasing chain of relations
\[
id_X=\varrho_0\leq \varrho_1\leq\ldots \leq \varrho_{n-1}\leq \varrho_n=X\times X.
\]

Additionally, if $k<n$ then a quotient $(A^{\varrho_k},\ast)$ is $(n-k)$-reductive. This means (\cite[Theorem 5.7.4]{RS}) that the variety of $n$-reductive binary modes coincides with the Mal'cev product of the varieties of $k$-reductive and $(n-k)$-reductive groupoid modes. In particular, this gives a recursive method of constructing $n$-reductive binary modes from right zero semigroups.

\subsection{The strong retraction}\label{subsec:strong}
Ced\'{o}, Jespers and Okni\'{n}ski introduced in \cite{CJO10} a stronger version of retractability of an involutive solution $(X,\sigma,\tau)$. They defined a relation $\rho$ on $X$ as follows:
\[
x\rho y\quad \Leftrightarrow\quad L_x=L_y\quad {\rm and}\quad x \;{\rm and}\; y\;{\rm are\; in \; the \; same\; orbit\; under\; the \; action \; of}\; \LMlt(X)\; {\rm on} \; X.
\]
The \emph{strong retraction} of $(X,\sigma,\tau)$ is taken then as the induced solution $(X^{\rho},\overline{\sigma},\overline{\tau})$.
Cedo et al. applied the relation $\rho$ to solve a problem of Gateva-Ivanova and showed that each involutive square free solution of the Yang-Baxter equation with abelian left multiplication group $\LMlt(X)$ is strongly retractable \cite[Corollary 2.9]{CJO10}.

In particular, they showed \cite[Theorem 2.5]{CJO10} that in a non trivial involutive square free solution $(X,\sigma,\tau)$ with abelian left multiplication group $\LMlt(X)$, there exists an orbit $X_k$, under the action of  $\LMlt(X)$ on $X$, with at least two different elements $x, y\in X_k$ such that $x\sim y$.

Independently, in  \cite[Definition 6.3]{JPZ} a quandle with such property of orbits was called \emph{quasi-reductive}.
Recall, an idempotent left quasigroup $(X,\ast,\ld_{\ast})$ is called a \emph{quandle} if it is \emph{left distributive}, i.e. for every $x,y,z\in X$
\begin{align*}
L_x(y\ast z)=L_x(y)\ast L_x(z)\quad \Leftrightarrow\quad
x\ast(y\ast z)=(x\ast y)\ast(x\ast z).
\end{align*}
Note that quandles are closely related to solutions of the Yang-Baxter equation --- any injective derived solution is a quandle \cite{ESG}. In knot theory biquandles were introduced as generalization of quandles.

Jedli\v cka, Pilitowska and Zamojska-Dzienio in \cite{JPZ} classified subdirectly irreducible medial quandles. A quandle is \emph{subdirectly irreducible} if and only if the intersection of all its non-trivial congruences, called the \emph{monolith congruence}, is non-trivial. In particular, they showed \cite[Main Theorem 6.4]{JPZ} that every subdirectly irreducible medial quandle with more than two elements is either a quasigroup or it is quasi-reductive. In finite case this characterization is even more readable since a subdirectly irreducible finite medial quandle is either quasigroup or it is $k$-reductive for some natural $k$. For reductive case, the congruence $\rho$ plays the role of the monolith. However, the group $\LMlt(X)$ for a quandle, needn't be abelian.

\end{document}